\numberwithin{equation}{section}
\numberwithin{figure}{section}
\newcommand{\be}{\begin{equation}}
\newcommand{\ee}{\end{equation}}
\newcommand{\NN}{\mathbb{N}}
\newcommand{\RR}{\mathbb{R}}
\newtheorem{thm}{Theorem}
\newtheorem{rmk}[thm]{Remark}
\newtheorem{lem}[thm]{Lemma}
\newtheorem*{lem*}{Lemma}
\newtheorem*{thm*}{Theorem}
\newtheorem{cor}[thm]{Corollary}
\newtheorem*{cor*}{Corollary}
\begin{document}

\title[Manifold embeddings by heat kernels of connection Laplacian]{Manifold embeddings by heat kernels of connection Laplacian} 
\author{Chen-Yun Lin}
\address{Chen-Yun Lin\\
Department of Mathematics\\
Lehman College, CUNY}
\email{Chenyun.Lin@lehman.cuny.edu}



\maketitle

\begin{abstract}
We show that any closed $n$-dimensional manifold $(M,g)$ can be embedded by a map constructed using the heat kernels of the connection Laplacian as well as a maps constructed using truncated heat kernel at a certain time $t$ from a $\delta$-net $\{q_i\}_{i=1}^{N_0}$ via a rescaling trick. Both the time $t$ and $N_0$ are bounded in terms of the dimension, bounds on the Ricci curvature and its derivative, the injectivity radius, and the volume. Moreover, both maps can be made arbitrarily close to an isometry. 
\end{abstract}

\newpage

\section{Introduction}

Data collected for the purpose of machine learning is often in a high-dimensional space, but yet is believed to satisfy certain low dimensional structure, that is, the collected dataset can be well approximated by a low dimensional manifold  sitting inside a high dimensional Euclidean space. See \cite{Belkin_Niyogi:2003, Coifman_Lafon:2006, hein2007manifold,fefferman2018fitting} for a far from complete list of available literature.
How to analyze a dataset under this assumption is generally called the manifold learning problem.
One particular goal is to recover the nonlinear low dimensional structure of the manifold and to reduce the dimensionality of the space where the dataset lies inside. 
Mathematically, this problem is formulated as asking if it is possible to have an embedding to put the manifold (hence the dataset)
into a finite dimensional Euclidean space that is bi-Lipschitz, even isometric.
Although the embedding problem was first positively answered by Whitney \cite{Whitney:1944}, and the isometrically embedding problem was
first solved by Nash \cite{Nash:1956},  the approaches  are not canonical and are not essentially feasible for data analysis. 
In Berard-Besson-Gallot’s breakthrough paper \cite{Berard_Besson_Gallot:1994}, the spectral embedding idea is explored.
 They show that  a manifold can be embedded into the sequence space $\ell^2$  with all  eigenfunctions. 

The spectral embedding idea is directly related to many manifold learning algorithms, like
eigenmaps \cite{Belkin_Niyogi:2003}, local linear embeddings \cite{Roweis_Saul:2000} and diffusion maps \cite{Coifman_Lafon:2006}.
There is also a rich theoretical literature describing how the graph Laplacian converges to the Laplace-Beltrami operator; for example,
the pointwise convergence wast discussed in  \cite{Belkin_Niyogi:2003, Coifman_Lafon:2006, Tenenbaum_deSilva_Langford:2000}  and the spectral convergence was discussed in \cite{Belkin_Niyogi:2007, Singer_Wu:2016, Eldridge_Belkin_Wang:2017, Trillos_et_al:2018, Calder:2019,dunson2019diffusion}, where the $L^\infty$ spectral convergence with rate was recently explored in \cite{dunson2019diffusion}.
Note that numerically we are able to obtain only finite eigenfunctions
and eigenvalues in practice since only finite number of points are available.
Thus, the next natural question  is whether we can embed the manifold by finite eigenfunctions and eigenvalues.
This question was positively answered separately by Bates \cite{Bates:2014} and Portegies \cite{Portegies:2015}  for the Laplace-Beltrami operator; that is, one is able to construct bi-Lipschitz embeddings of the manifold with finite eigenfunctions and eigenvalues of the Laplace–Beltrami operator.
In Portegies \cite{Portegies:2015}, it is further shown that the embedding is almost isometric with a prescribed error bound. 
Moreover,  Jones et al. \cite{Jones_Maggioni_Schul:2008} observed that in terms of analysis it may be easier to think of embedding with heat kernels. 

The spectral embedding mentioned above depends on the Laplace-Beltrami operator. The
vector diffusion map (VDM) \cite{Singer_Wu:2012, Singer_Wu:2016}, on the other hand, depends on the connection Laplacian
associated with a possibly nontrivial bundle structure. 
The VDM is originally motivated by studying
the cryo-electron microscope problem, in particular the class averaging algorithm \cite{Hadani_Singer:2011b, Singer_Zhao_Shkolnisky_Hadani:2011, Zhao_Singer:2014}. Its general goal is to integrate local information and the
relationship between these pieces of local information in order to obtain the global information
of the dataset; for example, the ptychographic imaging problem \cite{Marchesini_Tu_Wu:2014}, the synchronization problem \cite{Bandeira_Singer_Spielman:2013}, the vector nonlocal
mean/median \cite{lin2018manifold}, the orientability problem \cite{Singer_Wu:2011}, etc. Numerically, the VDM depends on the
spectral study of the graph connection Laplacian  \cite{Chung_Zhao_Kempton:2013, Chung_Kempton:2013, ElKaroui_Wu:2015a, ElKaroui_Wu:2015b, Singer_Wu:2012, Singer_Wu:2016} , which is a
direct generalization of the graph Laplacian discussed in the spectral graph theory \cite{chung:1997}.
In \cite{Lin_Wu:2018}, we showed that one is able to construct an embedding of a manifold with finite eigenvector fields of the connection Laplacian that is  bi-Lipschitz. 

The main contribution in this paper is to show that one can construct an embedding of a manifold with heat kernels of the connection Laplacian that is an almost isometry. Our approach is inspired by Portegies'  rescaling technique. Note that due to the lack of canonical isometry from $L^2(TM)$ to $\ell^2$, we do not have a similar almost isometry result for embeddings with eigenvector fields. 
In Section \ref{sec:main}, we introduce background on the heat kernels and state the main results. 
In Section \ref{sec:fund_sol}, we review the $C^{k,\alpha}$-harmonic radius and introduce the  rescaled heat kernel $\bar{K}_{TM}$ defined in (\ref{rescaled_kernel}) that satisfies the rescaled parabolic system (\ref{eq:parabolic}).   Following  \cite[Section 4 in Chapter 9]{Friedman:1964}, we construct a fundamental solution $\Gamma$ for (\ref{eq:parabolic}) and show that $\Gamma$ is close to the standard Euclidean heat kernel on a local harmonic chart.   We also show that any solution with sufficient decay is close to this fundamental solution $\Gamma$. Thus, the rescaled heat kernel $\bar{K}_{TM}$ is close to the stand Euclidean heat kernel.  In Section \ref{sec:embedding}, we prove our main Theorems. 

\section{Acknowledgments}
I would like to thank Hau-Tieng Wu for the support as well as the helpful discussions and comments. I would also like to thank Duke University for the research environment. I am also supported by PSC-CUNY 63043-00 51.

\section{Background and Main Results}
\label{sec:main}
In this section, we review some background that is required for the main results, as well as the Gaussian bound for the heat kernel that is used in the proofs of the main results.
\subsection{Heat kernel of connection Laplacian}
Let $M$ be an $n$-diimensional smooth closed manifold. 
We denote by $K_{TM}(p,t;q)$ the heat kernel of the connection Laplacian $\Delta_{TM}$ on the manifold $M$.
The heat kernel can be expressed as 
\[
K_{TM}\left(p,t; q\right)=\sum_{i=1}^{\infty} e^{-\lambda_{i}t}X_{i}(p)\otimes X_{i}(q),
\]
where $t>0$, $p,q \in M$, $\lambda_i$ are eigenvalues so that $0 \leq \lambda_1 \leq \lambda_2 \leq \cdots$ and $\{X_i\}_{i=1}^{\infty}$ is an  $L^2(TM)$-orthonormal basis formed by eigenvector fields of $\Delta_{TM}$.
We define the {\em $m$-th order truncated heat kernel} $K_{TM}^{(m)}$ by
\begin{equation}
K_{TM}^{(m)}(p,t;q) := \sum_{i=1}^m e^{-\lambda_{i}t}X_{i}(p)\otimes X_{i}(q). \nonumber
\end{equation}
Let  $\| \cdot \|_{HS}$ denote the Hilbert-Schmidt norm. We have

\begin{align*}
\left\Vert K_{TM}\left(p,t; q\right)\right\Vert _{HS}^{2} 
&= \mathrm{Tr}\left( K_{TM}(p,t; q)^* K_{TM}(p,t;q) \right) \\
&= \sum_{i,j=1}^{\infty}e^{-\left(\lambda_{i}+\lambda_{j}\right)t}\left\langle X_{i}(p),X_{j}(p)\right\rangle \left\langle X_{i}(q),X_{j}(q)\right\rangle; 
\end{align*}
and
\begin{equation*}
\left\Vert K_{TM}^{(m)} \left(p,t; q\right)\right\Vert _{HS}^{2}=\sum_{i,j=1}^{m}e^{-\left(\lambda_{i}+\lambda_{j}\right)t}\left\langle X_{i}(p),X_{j}(p)\right\rangle\left\langle X_{i}(q),X_{j}(q)\right\rangle.
\end{equation*}

\subsection{Dilatation}
Given two metric spaces $(\mathfrak{M}_1, d_1)$ and ($\mathfrak{M}_2,d_2)$, the local dilatation of a map $f: \mathfrak{M}_1 \rightarrow \mathfrak{M}_2$ at a point $p\in \mathfrak{M}_1$ is defined as 
\begin{equation}
\mathrm{dil}_p(f) := \lim_{ r \rightarrow 0} \sup_{x,y \in B_r(p)} \frac{d_2(f(x),f(y))}{d_1(x,y)}\,, \nonumber
\end{equation}
where $B_r(p)$ is a ball in $\mathfrak{M}_1$ of radius $r$ and center $p$.
In the special case that $M$ is a smooth Riemannian manifold mapped into a normed, finite-dimensional vector space via a smooth map $f$, the dilatation is given by 
\begin{equation}
\mathrm{dil}_p(f) = |(df)_p| \nonumber\,,
\end{equation}
where the norm of the right-hand side is interpreted as the operation norm of the map from $T_pM$ to $T_{f(p)}f(M)$. 

\subsection{Main results}

Consider a set of closed smooth manifolds of dimension $n$ 
\be
\mathcal{M}_{n,\kappa, i_0,V} := \{ (M^n,g) :  |Rc|, |\nabla Rc| \leq \kappa ,\, \mathrm{inj(M)} \geq i_0,\,\mathrm{Vol}(M)\leq V\}, \nonumber
\ee
where $\mathrm{inj}(M)$ denotes the injectivity radius of $M$ and $Rc$ and $\nabla Rc$ denote the Ricci curvature and its covariant derivative, respectively.

We prove the following theorems:

\begin{thm}
\label{thm:embedding}
Given $\epsilon >0$, there exists $t_0 = t_0(n, \kappa, i_0, \epsilon)$ so that for all $t< t_0$ there  exists $\delta = \delta(n, \kappa, i_0, \epsilon, t)$ such that for all $(M,g) \in \mathcal{M}_{n,\kappa, i_0,V}$ and every $\delta$-net $\{q_1, \cdots, q_{N_0}\}$ on $M$, the map
\begin{align}
H : M^n &\rightarrow \mathbb{R}^{N_0}  \nonumber \\
 p &\mapsto \frac{(2t)^{(3n+2)/4}}{V_e}\left( |A_1|^{1/2} \| K_{TM} (p, t; q_1) \|^2_{HS}, \cdots, |A_{N_0}|^{1/2} \| K_{TM} (p, t; q_{N_0}) \|^2_{HS}\right) 
\end{align}
is an embedding and the dilatation is controlled by a given $\epsilon$,
\begin{equation}\label{almostiso}
1 - \epsilon < |(dH)_p|^2 < 1+ \epsilon  \quad \mbox{for all $p \in M$}.
\end{equation}

Here $\{ A_i \}_{i=1}^{N_0}$ is a partition of $M$ so that $A_i \subset B_{\delta}(q_i)$ for all $i$ and 
\begin{equation}
V_ e:= \left( \int_{\mathbb{R}^n} (\partial_{x_1} \| \Gamma_E(0,\frac{1}{2}; y)\|^2_{HS})^2 dy \right)^{1/2}\,,
\end{equation}
where $\Gamma_E$ is the standard Euclidean heat kernel on $\mathbb{R}^n\times \mathbb{R}$ whose definition is reviewed in \eqref{Definition standard Euclidean heat kernel}.

Moreover, there exists an $N = N(n,\kappa, i_0, V, \epsilon, t) $ so that for all $m \geq N$, the same statements hold for the map 
$H^{(m)}$ defined by truncated heat kernels
\begin{equation} \label{finite}
H^{(m)}(p) :=   \frac{(2t)^{\frac{(3n+2)}{4}}}{V_e}\left( |A_1|^{\frac{1}{2}} \| K^{(m)}_{TM} (p, t; q_1) \|^2_{HS}, \cdots, |A_{N_0}|^{\frac{1}{2}} \| K^{(m)}_{TM} (p, t; q_{N_0}) \|^2_{HS}\right)
\end{equation}
\end{thm}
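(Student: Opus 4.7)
The plan is to rewrite the squared dilatation as a Riemann sum and compare it to an integral that, after rescaling, reduces to an explicit Gaussian integral in $\mathbb{R}^n$. For a unit tangent vector $v \in T_pM$,
\[
|(dH)_p(v)|^2 = \frac{(2t)^{(3n+2)/2}}{V_e^2} \sum_{i=1}^{N_0} |A_i| \bigl( d_p \|K_{TM}(p,t;q_i)\|_{HS}^2(v) \bigr)^2.
\]
Since $\{A_i\}$ partitions $M$ with $A_i \subset B_\delta(q_i)$, this sum approximates
\[
I(v) := \frac{(2t)^{(3n+2)/2}}{V_e^2} \int_M \bigl( d_p \|K_{TM}(p,t;q)\|_{HS}^2(v) \bigr)^2 \, dq,
\]
with Riemann sum error controlled by the modulus of continuity in $q$ of the integrand; by the Gaussian bound on $\|K_{TM}\|_{HS}$ (and a corresponding bound on its derivatives), this error can be made smaller than any prescribed quantity by choosing $\delta = \delta(n,\kappa,i_0,\epsilon,t)$ sufficiently small. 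It therefore suffices to show $|I(v) - |v|^2| < \epsilon/2$ uniformly in $p \in M$ and in unit vectors $v$.

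To evaluate $I(v)$, I change variables $q = \exp_p(\sqrt{t}\,y)$ on the harmonic chart around $p$, so that $dq = t^{n/2}(1 + O(\sqrt{t}))\,dy$ by the $C^{k,\alpha}$ harmonic radius estimates recalled in Section \ref{sec:fund_sol}. In these rescaled coordinates the integrand is expressed in terms of $\bar K_{TM}$, the rescaled heat kernel of \eqref{rescaled_kernel}, and by the main replacement estimate of Section \ref{sec:fund_sol} one has $\bar K_{TM} \to \Gamma_E$ in $C^1$ on compact sets in $y$ at a rate depending only on $n, \kappa, i_0$. Substituting this and keeping track of the powers of $t$ absorbed into the prefactor $(2t)^{(3n+2)/2}$, the bulk of $I(v)$ becomes
\[
\frac{1}{V_e^2} \int_{\mathbb{R}^n} \bigl( \partial_v \|\Gamma_E(0,1/2;y)\|_{HS}^2 \bigr)^2 \, dy \cdot (1 + o_{t \to 0}(1)),
\]
which equals $|v|^2(1+o(1))$ by the rotational symmetry of $\Gamma_E$ and the definition of $V_e$. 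The tail $|y| \geq R$ is controlled by Gaussian decay of $\|K_{TM}\|_{HS}^2$ and its derivatives, giving $|I(v) - |v|^2| < \epsilon/2$ once $t$ is small (then $R$ large, then $\delta$ small). Combining yields \eqref{almostiso}.

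The almost-isometry \eqref{almostiso} forces $dH$ to be injective, so $H$ is an immersion. For global injectivity I separate scales: for pairs $p_1, p_2$ with $d_g(p_1,p_2) < c\sqrt{t}$ (for an appropriate constant $c$), \eqref{almostiso} integrated along a minimizing geodesic distinguishes them, while for $d_g(p_1,p_2) \geq c\sqrt{t}$, I pick $q_i$ within $\delta$ of $p_1$: by the on-diagonal lower bound and off-diagonal Gaussian upper bound on $\|K_{TM}\|_{HS}^2$, the $i$-th component of $H(p_1)$ strictly exceeds that of $H(p_2)$. Hence $H$ is an embedding.

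For the truncated map $H^{(m)}$, the task reduces to showing that $\|K_{TM}(p,t;q) - K^{(m)}_{TM}(p,t;q)\|_{HS}$, together with its $p$-gradient, tends to zero uniformly in $(p,q)$ as $m \to \infty$. Writing the tail as $\sum_{i>m} e^{-\lambda_i t} X_i(p) \otimes X_i(q)$, this follows from sup-norm bounds on eigenvector fields in the class $\mathcal{M}_{n,\kappa,i_0,V}$ combined with the exponential factor $e^{-\lambda_i t}$ and Weyl-type asymptotics for $\lambda_i$; the dependence of the required $N$ on $V$ enters here through the Weyl bound. Once $m \geq N(n,\kappa,i_0,V,\epsilon,t)$ the entire argument above carries over with $K_{TM}$ replaced by $K^{(m)}_{TM}$. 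The main technical obstacle I anticipate is implementing the $\bar K_{TM} \to \Gamma_E$ replacement together with the change of variables at the level of first derivatives uniformly over the family $\mathcal{M}_{n,\kappa,i_0,V}$; however, the harmonic chart and fundamental solution machinery set up in Section \ref{sec:fund_sol} is designed precisely for this.
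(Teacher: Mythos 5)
Your proposal follows essentially the same strategy as the paper: rewrite $|dH_p(v)|^2$ as a weighted Riemann sum, pass to rescaled coordinates with $t$ comparable to the square of the scale, replace $\bar K_{TM}$ by $\Gamma_E$ using the fundamental-solution comparison from Section~\ref{sec:fund_sol}, control the far-field tail by Gaussian decay, and identify the limiting integral with $V_e^2|v|^2$ by symmetry. The paper organizes the error control slightly differently --- it splits the index set into three regimes $I_1, I_2, I_3$ by distance from $p$ and treats each sum separately, rather than first replacing the sum by an integral over $M$ and then analyzing that integral --- but this is cosmetic. A more substantive mismatch: your change of variables uses $\exp_p$, whereas the paper's entire machinery (the rescaled parabolic system \eqref{eq:parabolic}, Lemmas~\ref{lem:sol} and~\ref{lem:closeness}) is set up in a $C^{2,\alpha}$-harmonic chart, where the coefficient bounds \eqref{metric:1}--\eqref{metric:2} are what make the parametrix and Schauder estimates go through. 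If you use normal coordinates instead you would need to re-derive the coefficient control, so you should work in the harmonic chart.

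Two further points of comparison. First, you give a global injectivity argument (integrating \eqref{almostiso} along short geodesics, and using on-diagonal lower versus off-diagonal Gaussian upper bounds on $\|K_{TM}\|_{HS}$ for well-separated pairs). The paper's written proof actually never addresses injectivity at all --- it establishes only the dilatation bound --- so you are supplying a step the paper leaves implicit. Your sketch is sound, but you should note that the short-distance case needs not just the local bi-Lipschitz estimate \eqref{almostiso} but also a uniform bound on the second derivative of $H$ (so the image curve does not fold back), and the on-diagonal lower bound for $\|K_{TM}\|_{HS}$ needs to be extracted from short-time asymptotics (semigroup domination gives only the upper bound). Second, for the truncated kernel $H^{(m)}$, you propose to prove the uniform $C^1$ convergence $K^{(m)}_{TM} \to K_{TM}$ directly from sup-norm eigenvector-field bounds and Weyl asymptotics; the paper instead simply cites Lemma~10 of \cite{Lin_Wu:2018}, which carries out exactly that estimate. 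The content is the same.
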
 
We comment that the $N_0$  is not necessary universal. It is for {\bf every} choice of covering sets that gives a $\delta$-net, where $\delta$ depends on the geometry, $\epsilon$ and $t$. 

\begin{thm}
\label{thm:uniform}
Given $\epsilon >0$, there exists $t_0 = t_0(n, \kappa, i_0, \epsilon)$ so that for all $t< t_0$ there  exists $N_0 = N_0(n, \kappa, i_0, \epsilon, t, V)$ such that for all $(M,g) \in \mathcal{M}_{n,\kappa, i_0,V}$, there exist points  $\{q_1, \cdots, q_{N_0}\}$ on $M$ such that the map
\begin{align}
\mathcal{H} : M^n &\rightarrow \mathbb{R}^{N_0}  \nonumber \\
 p &\mapsto \frac{(2t)^{(3n+2)/4}}{V_e} A \left(\| K_{TM} (p, t; q_1) \|^2_{HS}, \cdots, \| K_{TM} (p, t; q_{N_0}) \|^2_{HS}\right) 
\end{align}
where $A = A(n,\kappa, i_0,  V, \epsilon, t)>0$ is a small constant,  is an embedding and the local dilatation satisfies
\begin{equation}\label{almostiso2}
1 - \epsilon < |dH_p|< 1+ \epsilon \quad \mbox{for all $p \in M$.}
\end{equation}
Moreover, there exists an $N = N(n,\kappa, i_0, V, \epsilon, t) $ so that for all $m \geq N$, the same statements hold with every heat kernel $K_{TM}$ replaced by the truncated heat kernel $K^{(m)}_{TM}$,
\begin{align}
\mathcal{H}^{(m)} : M^n &\rightarrow \mathbb{R}^{N_0}  \nonumber \\
 p &\mapsto \frac{(2t)^{(3n+2)/4}}{V_e} A \left(\| K_{TM}^{(m)} (p, t; q_1) \|^2_{HS}, \cdots, \| K_{TM}^{(m)} (p, t; q_{N_0}) \|^2_{HS}\right) 
\end{align}
\end{thm}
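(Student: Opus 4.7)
The plan is to deduce Theorem \ref{thm:uniform} from Theorem \ref{thm:embedding} by selecting a $\delta$-net together with an admissible partition whose cell volumes $|A_i|$ are all equal; the single constant $A$ then replaces the individual weights $|A_i|^{1/2}$ with no further cost to the dilatation estimate. First I would apply Theorem \ref{thm:embedding} with $\epsilon' = \epsilon/3$ in place of $\epsilon$, obtaining $t_0 = t_0(n,\kappa,i_0,\epsilon)$ and, for every $t<t_0$, a scale $\delta_0 = \delta_0(n,\kappa,i_0,\epsilon,t)$ such that the map $H$ built from any $\delta$-net with $\delta \le \delta_0$ and any admissible partition satisfies $1-\epsilon/3 < |dH_p|^2 < 1+\epsilon/3$ pointwise.

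Next I would fix a maximal $\delta_0$-separated set $\{q_i\}_{i=1}^{N_0}$, which is automatically a $\delta_0$-net; volume packing combined with the bound $\mathrm{Vol}(M)\le V$ gives $N_0 \le C(n,\kappa) V \delta_0^{-n}$ and hence $N_0 = N_0(n,\kappa,i_0,V,\epsilon,t)$. Bishop--Gromov, using the two-sided Ricci bound, yields $|B_r(q_i)| = \omega_n r^n (1+O_{n,\kappa}(r^2))$ uniformly in $i$ and in $r\le\delta_0$, so the Voronoi cells $\mathrm{Vor}(q_i) \subset B_{\delta_0}(q_i)$ all have volume comparable to $\omega_n\delta_0^n$. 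Starting from this Voronoi partition and transferring measure across facets between adjacent cells, I would construct an admissible partition $\{A_i\}$ with $A_i \subset B_{2\delta_0}(q_i)$ and $|A_i| = V/N_0 =: A^2$ for every $i$. With this choice the map $\mathcal H$ in the statement coincides with the map $H$ of Theorem \ref{thm:embedding} built from $\{A_i\}$, so the dilatation bound \eqref{almostiso} transfers and gives \eqref{almostiso2} after taking square roots and folding the $\epsilon/3$ slack to cover the discrepancy between $|dH_p|^2$ and $|dH_p|$; embeddability of $\mathcal H$ follows from that of $H$. The truncated part of Theorem \ref{thm:embedding} supplies $N = N(n,\kappa,i_0,V,\epsilon,t)$, and the identification $\mathcal H^{(m)}\equiv H^{(m)}$ extends the conclusion to $\mathcal H^{(m)}$ for all $m \ge N$.

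The main obstacle is the uniformization step: producing a partition with exactly equal cell volumes while keeping each $A_i$ inside $B_{2\delta_0}(q_i)$. The $\delta_0$-separation bounds the degree of the nerve graph of the Voronoi diagram in terms of $n$ and $\kappa$ alone, and Bishop--Gromov bounds the total excess of Voronoi cell volumes above the mean $V/N_0$ by $O(N_0\delta_0^{n+2})$, so a finite sequence of local facet transfers along the nerve graph should suffice; I would formalize the redistribution via a max-flow / optimal transport argument. A more robust but less tidy alternative is to require only $|A_i| \in (1-\eta,1+\eta) A^2$ and to absorb the resulting $(1\pm\eta)^2$ perturbation of the weights into the $\epsilon/3$ dilatation slack.
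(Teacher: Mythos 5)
Your proposal takes a genuinely different route from the paper, and the route has a real gap at exactly the place you flag as ``the main obstacle.''

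The paper's proof is a one-line reduction: keep the $\delta$-net and the partition $\{A_i\}$ from Theorem~\ref{thm:embedding} \emph{as is}, and for each $i$ list the point $q_i$ with multiplicity $N_i \approx |A_i|/A^2$ (the paper writes $\lceil |A_i|/A\rceil$, but the exponent should clearly be $2$), giving each copy the uniform weight $A$. Since the $N_i$ copies of $q_i$ all contribute the same coordinate function, the squared differential picks up a factor $N_i A^2 \approx |A_i|$, i.e.\ $|d\mathcal H_p(v)|^2 \approx |dH_p(v)|^2$ with the error controlled by the rounding, which vanishes as $A\to 0$. No new partition has to be built, no geometry of Voronoi cells enters, and the bound $N_0 = \sum N_i \lesssim V/A^2 + N_0^{\mathrm{old}}$ gives the dependence on $(n,\kappa,i_0,V,\epsilon,t)$ directly. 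This is precisely the sense in which $A$ is a ``small constant'': shrinking $A$ refines the replication.

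Your route instead tries to realize the constant weight $A$ by forcing the partition cells to have equal volume, so that $\mathcal H$ literally equals $H$ for the special partition. The difficulty is the constraint $A_i \subset B_{C\delta_0}(q_i)$ for a fixed constant $C$. Starting from Voronoi cells of a maximal $\delta_0$-separated set, Bishop--Gromov only gives $|\mathrm{Vor}(q_i)| \in [c(n,\kappa)\delta_0^n, C(n,\kappa)\delta_0^n]$: comparability, not closeness. The ratio between extreme cells is a fixed constant $>1$, so the ``more robust alternative'' of $|A_i| \in (1\pm\eta)A^2$ with small $\eta$ is \emph{not} available without first equalizing, and the equalization itself is the problem. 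Moreover, the local-facet-transfer / max-flow sketch can fail the locality constraint: if a long string of adjacent cells is uniformly at deficit while a neighboring string is uniformly at excess, balancing forces a cumulative flow across a single facet that grows linearly in the length of the string, so the final territory $A_i$ of a cell near the interface would extend many $\delta_0$'s away from $q_i$. One would need either a genuinely global argument showing such configurations cannot occur for nets in $\mathcal{M}_{n,\kappa,i_0,V}$, or a more careful (volume-balanced) choice of the net points themselves. Neither is a small lemma, and as written your proposal leaves it open. In short: the idea is sound if the equal-volume partition lemma were available, but proving that lemma is the hard part, and the paper's replication trick shows it is entirely unnecessary.
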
 
Note that this $N_0$ is universal as we choose a partition that depends on the geometry, $t$ and $\epsilon$.  

In order to prove the main results, we heavily rely on the Gaussian bound of the heat kernel of the connection Laplacian, which is stated below.
\subsection{Gaussian bound  of the heat kernel}

Let $K_M(p,t;q)$ denote the heat kernel of the Laplace-Beltrami operator $\Delta_M$ on $(M,g)$. By the Faber-Krahn inequality \cite[Section 14.2]{Grigoryan:2009} and the exponential decay \cite[Theorem 15.14]{Grigoryan:2009} of the heat kernel $K_M(p,t;q)$, we have   for $p, q$ in a ball of radius less than the $C^{2,\alpha}$-harmonic radius  $r_h$, whose definition is reviewed in Subsection \ref{harmonic radius},
\be
 K_M(p,t;q) \leq \frac{C(n)(1+d(p,q)^2/t)^{n/2}}{(a(n) \min(t, r_h^2))^{n/2}} \exp\left(-\frac{d(p,q)^2}{4t}\right)
\ee
where $d(p,q)$ denotes the geodesic distance between $p, q \in M$ for constants $a(n)>0$ and $C(n)>0$ depending only on $n$.

In addition, the semigroup domination theorem by Hess-Schrader-Uhlenbrock \cite[Theorem 3.1]{Hess_Schrader_Uhlenbrock:1980} states the following in our setting
\begin{equation}
\label{semi_group_domination}
\textrm{Tr} \left( e^{t\Delta_{TM}} \right) \leq n \textrm{Tr}\left(  e^{t \Delta_M} \right), \nonumber
\end{equation} 
which implies that
\begin{align}
\label{est:heat_kernel}
\| K_{TM}(p,t;q) \|_{HS} \leq n K_M(p,t;q) \leq \frac{C(n)(1+d(p,q)^2/t)^{n/2}}{(a(n) \min(t, r_h^2))^{n/2}} \exp\left(-\frac{d(p,q)^2}{4t}\right).
\end{align}

\section{Foundamental Solutions on Charts} 
\label{sec:fund_sol}
The goal in this section is to show that the {\em rescaled heat kernel}, $\bar{K}_{TM}$ defined in (\ref{rescaled_kernel}), is close to the standard heat Kernel $\Gamma_E$. We start with reviewing the harmonic radius and there is a universal lower bound for the harmonic radius that is independent of the point on the manifold. This allows us to work on a harmonic chart of certain size.
In Lemma \ref{lem:sol}, we construct  a fundamental solution $\Gamma$ of the rescaled parabolic system (\ref{eq:parabolic}) that is close to the Euclidean heat kernel using  the parametrix  method as presented in Friedman's book \cite[Section 4 in Chapter 9]{Friedman:1964}.
In Lemma \ref{lem:closeness}, we  show that the rescaled heat kernel $\bar{K}_{TM}$ and the fundamental solution $\Gamma$ are close by applying the Schauder estimates \cite{Friedman:1958} and the semigroup domination theory. Throughout the paper, the constant $C$ varies line by line. 

\subsection{The $C^{k,\alpha}$-Harmonic Radius}
\label{harmonic radius}

Recall that  (see \cite{ Hebey_Herzlich:1998})
 given $Q>1$ and $\alpha \in (0,1)$, the $C^{k,\alpha}$-harmonic radius at $p$ in $M$ is defined as the largest number $r_h = r_h(Q,k,\alpha)(p)$ such that on the geodesic ball $B_{r_h}(p)$ of center $p$ and radius $r_h$, there is a harmonic coordinate chart
$u: B_{r_h}(p)\subset M \rightarrow U\subset \mathbb{R}^n$ such that $ u(p) = 0$, $g_{ab}(p) = \delta_{ab}$
and the metric tensor is $C^{k,\alpha}$ controlled in these coordinates. Namely, if $g_{ab}$, $a,b=1, \cdots , n$ are the components of $g$ in these coordinates, then 
\begin{align} 
& Q^{-1} \delta_{ab} \leq g_{ab} \leq Q \delta_{ab} \mbox{ as bilinear forms  } \label{metric:1}  \\
& \displaystyle \sum_{1 \leq |\beta| \leq k} r_h^{|\beta|} \sup_p |\partial^{\beta} g (p)|  + \sum_{|\beta| = k} r_h^{2+\alpha} \sup_{p\neq q} \frac{|\partial^{\beta}g_{ij}(p) - \partial^{\beta} g_{ij}(q)|}{d(p,q)^{\alpha}} \leq Q-1 \label{metric:2} \,, 
\end{align}
where $d(p,q)$ denotes the geodesic distance on $(M,g)$. The harmonic radius of $(M,g)$, denoted as $r_h(Q,k,\alpha)(M)$, is defined by  
\begin{align}
r_h(Q,k,\alpha)(M) := \inf_{p\in M} r_h(Q,k,\alpha)(p).
\end{align}

For any manifold $(M,g) \in \mathcal{M}_{n,\kappa, i_0,V}$, it is proved by Hebey-Herzlich \cite[Corollary of Theorem 6]{Hebey_Herzlich:1998} that there exist $C^{2,\alpha}$-harmonic coordinate charts whose size is independent of the choice of points.
Here, we restate the  Corollary for our case. 
\begin{cor}
\label{cor:har_rad}
Let $\alpha \in (0,1)$ and $Q>1$. Let $(M,g)$ be a smooth closed $n$-dimensional Riemannian manifold. Suppose for some $\kappa >0 $, $i_0 >0$ 
\be
|Rc|, |\nabla Rc| \leq \kappa \quad\mbox{and}\quad \mathrm{inj}(M) \geq i_0\,. \nonumber
\ee
Then, there exists a constant $C = C(n, Q, \alpha, \kappa, i_0)$ such that the $C^{2,\alpha}$-harmonic  radius 
\begin{equation}
r_h = r_h(Q, 2, \alpha)(M) \geq C.\nonumber
\end{equation}
\end{cor}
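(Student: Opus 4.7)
The plan is to follow the standard contradiction-by-rescaling argument of Anderson, adapted to the $C^{2,\alpha}$ setting by Hebey and Herzlich, of which this corollary is a direct specialization. The high-level idea: if the $C^{2,\alpha}$-harmonic radius could be made arbitrarily small over the class $\mathcal{M}_{n,\kappa,i_0,V}$, rescale the offending metric so that the harmonic radius at the bad point is normalized to $1$, extract a Cheeger--Gromov limit in which the curvature has vanished and the injectivity radius has become infinite, and derive a contradiction from the fact that such a limit must admit harmonic charts of arbitrarily large size.

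Concretely, suppose for contradiction that no uniform $C>0$ exists; then one can find a sequence $(M_j,g_j)\in\mathcal{M}_{n,\kappa,i_0,V}$ and points $p_j\in M_j$ with $r_j := r_h(Q,2,\alpha)(p_j) \to 0$. Since the harmonic radius scales linearly under homotheties, the rescaled metrics $\tilde g_j := r_j^{-2}g_j$ satisfy $r_h^{\tilde g_j}(Q,2,\alpha)(p_j)=1$, while
\begin{equation*}
|Rc|_{\tilde g_j}\le r_j^{2}\kappa\to 0, \qquad |\nabla Rc|_{\tilde g_j}\le r_j^{3}\kappa\to 0, \qquad \mathrm{inj}_{\tilde g_j}(p_j)\ge i_0/r_j\to\infty.
\end{equation*}
These bounds place $(M_j,\tilde g_j,p_j)$ within the scope of the Anderson--Cheeger--Gromov $C^{2,\alpha'}$-compactness theorem for any $\alpha'<\alpha$, so after passing to a subsequence we obtain pointed $C^{2,\alpha'}$-convergence to a smooth complete pointed Riemannian manifold $(M_\infty,g_\infty,p_\infty)$ with $Rc(g_\infty)\equiv 0$ and $\mathrm{inj}_{g_\infty}(p_\infty)=\infty$. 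On such a limit one can construct harmonic charts on $B_R(p_\infty)$ satisfying \eqref{metric:1} and \eqref{metric:2} for every finite $R$: solving the Dirichlet problem $\Delta_{g_\infty}u^a=0$ on $B_R$ with boundary data close to linear coordinates and invoking Schauder estimates for the elliptic identity $\Delta g_\infty = -2\,Rc(g_\infty)+Q(g_\infty,\partial g_\infty)$, whose Ricci contribution vanishes and whose quadratic $Q$-term stays small as long as $g_\infty$ stays close to Euclidean, lets one bootstrap the chart to arbitrary scales. Hence $r_h^{g_\infty}(Q,2,\alpha)(p_\infty)=\infty$, which by the lower semicontinuity of the $C^{2,\alpha}$-harmonic radius under pointed $C^{2,\alpha'}$-convergence would force $r_h^{\tilde g_j}(Q,2,\alpha)(p_j)\to\infty$, contradicting $r_h^{\tilde g_j}(p_j)\equiv 1$.

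The main obstacle --- and the technical heart of the Hebey--Herzlich proof --- is this semicontinuity statement itself. Establishing it requires transporting a fixed harmonic chart on $(M_\infty,g_\infty)$ back to $(M_j,\tilde g_j)$ via the diffeomorphisms implementing the convergence, then correcting the transported functions to be exactly harmonic for $\tilde g_j$ by solving $\Delta_{\tilde g_j}u_j^a=0$ with boundary data inherited from the limit chart. One then verifies, via elliptic Schauder estimates combined with the $C^{2,\alpha'}$-convergence of the metric tensors, that the resulting charts satisfy \eqref{metric:1} and \eqref{metric:2} on balls of radius arbitrarily close to $r_h^{g_\infty}(p_\infty)$. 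It is precisely here that the full $|\nabla Rc|$ hypothesis --- rather than just $|Rc|$ --- is consumed, to upgrade from $C^{1,\alpha}$ to $C^{2,\alpha}$ regularity of the metric in the chart; this is the essential difference between Anderson's original $C^{1,\alpha}$-theorem and its Hebey--Herzlich refinement, and it is the only point where the argument is genuinely delicate.
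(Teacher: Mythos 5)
The paper does not actually prove this corollary: it simply restates the Corollary of Theorem 6 in Hebey--Herzlich \cite{Hebey_Herzlich:1998}, specialized to the class $\mathcal{M}_{n,\kappa,i_0,V}$, and takes the lower bound on the $C^{2,\alpha}$-harmonic radius as given. Your proposal reconstructs the underlying Anderson-style blow-up argument, which is indeed the method Hebey and Herzlich use, so the overall strategy is the right one: the scaling identities for $|Rc|_{\tilde g_j}$, $|\nabla Rc|_{\tilde g_j}$ and $\mathrm{inj}_{\tilde g_j}$ are correct, $C^{2,\alpha'}$-Cheeger--Gromov compactness is the right tool, and the final contradiction via semicontinuity of the harmonic radius under the normalization $r_h^{\tilde g_j}(p_j)=1$ is exactly how the argument closes. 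You also correctly identify that the $|\nabla Rc|$ bound is what upgrades Anderson's original $C^{1,\alpha}$ result to the $C^{2,\alpha}$ setting.

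The one step that is not watertight as written is the claim that one can directly build harmonic charts of every radius on the Ricci-flat limit by a Dirichlet/Schauder bootstrap whose quadratic term ``stays small as long as $g_\infty$ stays close to Euclidean.'' That closeness is precisely what needs to be established on the larger ball, so as stated the bootstrap is circular; moreover you only record the injectivity-radius blow-up at the basepoint, which by itself does not force $g_\infty$ to be globally close to Euclidean. The clean repair, and what the cited source effectively does, is to note that the hypothesis $\mathrm{inj}(M)\geq i_0$ is a bound at every point, so $\mathrm{inj}_{\tilde g_j}\geq i_0/r_j$ everywhere and hence $\mathrm{inj}_{g_\infty}\equiv\infty$ on all of $M_\infty$. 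Combined with $Rc(g_\infty)\equiv 0$, every geodesic of $M_\infty$ is a line, so iterated Cheeger--Gromoll splitting forces $(M_\infty,g_\infty)$ to be flat Euclidean $\mathbb{R}^n$, for which the identity chart trivially gives $r_h^{g_\infty}(p_\infty)=\infty$. Substituting this identification of the limit for your bootstrap step makes the argument complete.
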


%

That is, for any $M \in \mathcal{M}_{n,\kappa, i_0,V}$, there is a universal $C^{2,\alpha}$-harmonic radius $r_h$ that is independent of the choice of point. 
Let $p \in M$ and let  $u: B_{r_h}(p) \rightarrow \RR^n$ be a harmonic coordinate chart with $u(p) = 0$ satisfying (\ref{metric:1}) and (\ref{metric:2}). 
%
For a vector field $X = X^a\partial_a$, by a direct calculation, we have 
\begin{eqnarray}
\label{eq:elliptic}
\Delta_{TM} \left(X^c \partial_c \right)
=- g^{ab}\left(\partial_a \partial_b X^c +2 \Gamma_{bd}^c \partial_a X^d + \Gamma_{ae}^c \Gamma_{bd}^e X^d + \partial_a \Gamma_{bd}^c  X^d \right) \partial_c.
\end{eqnarray}
Note that the coefficients of $\Delta_{TM}  X$ are controlled in $C^{2,\alpha}$-harmonic coordinates by (\ref{metric:2}).

\begin{rmk}
In \cite{Lin_Wu:2018} where we have similar results but without discussing the Lipschitz control,  we write the term $ \partial_a \Gamma_{bd}^c  X^d$ in terms of the Ricci curvature and hence having $|Rc|<\kappa$ is enough to have $C^{1,\alpha}$-harmonic radius and bounded coeffients. However, in this paper we will start with a rougher guess in the parametrix process to build a solution that is close to the stand Euclidean heat kernel and need subtler controls of the coefficients. 
Therefore, the class of manifold $\mathcal{M}_{n,\kappa, i_0,V}$  requires the extra condition $|\nabla Rc| \leq \kappa$ to guarantee a universal $C^{2,\alpha}$-harmonic radius bound. 
\end{rmk}

\subsection{ Rescaled Heat Kernel}
Let  
$u: B_{r_h}(p)\subset M \rightarrow U\subset \mathbb{R}^n$
be  a harmonic coordinate chart with $u(p)=0$. 
Define a {\em rescaled heat kernel} on $\RR^n \times \RR_+$ associated with the heat
kernel $K_{TM}$ of the connection Laplacian as:
\begin{equation}
\label{rescaled_kernel}
\bar{K}_{TM}(x,s;y) := r^n K_{TM}( u^{-1}(xr), sr^2; u^{-1}(yr)),
\end{equation}
where $s > 0$, $r>0$ is a scale factor to be chosen later, and $x, y$  in the proper domain, $U/r:=\{x\in \mathbb{R}^d|\, rx \in U\}\subset \mathbb{R}^n$.
Since $u$ is a differomorphism, denote $(u^{-1})^*g$ to be the induced metric on $U$, and denote the rescaled metric as $\bar{g}(x) = (u^{-1})^*g(rx)$. Denote the Christophe symbol of $(u^{-1})^*g$ as $\Gamma_{ab}^c$, we have the Christophe symbol for the rescaled metric satisfying $\bar{\Gamma}_{ab}^c(x) = \Gamma_{ab}^c(rx)$. On $(U, (u^{-1})^*g)$, denote $L=\Delta_{(u^{-1})^*g}-\partial_s$ to be the parabolic system, and $\bar{L}$ to be the rescaled version. By a direct calculation, it satisfies the following expansion:
 \begin{equation}
\label{eq:parabolic}
\bar L X^c  :=  \bar{ g}^{ab}\left(\partial_a \partial_b X^c +2 r\bar{ \Gamma}_{bd}^c \partial_a X^d + r^2\bar{ \Gamma}_{ae}^c \bar{\Gamma}_{bd}^e X^d +  r^2 \partial_a \bar{\Gamma}_{bd}^c  X^d \right)- \partial_s X^c =0, 
\end{equation}
where  $c = 1 \cdots, n$ and $X^c$ is a function defined on $U/r$.

We consider the standard Euclidean heat kernel on  $\RR^n \times \mathbb{R}$ given by 
\begin{equation}
\Gamma_E(x, t ; y) = \frac{1}{(4\pi t)^{n/2}} e^{-\frac{|x-y|^2}{4t}} I_n, \label{Definition standard Euclidean heat kernel}
\end{equation}
where $t>0$, $x,y\in \mathbb{R}^n$, and $I_n$ is the $n \times n$ identity matrix. Note that $\Gamma_E$ is the heat kernel of the connection Laplacian associated with the trivial tangent bundle of the canonical Euclidean space $\mathbb{R}^n$.
Below, we  show that $\bar{K}_{TM}$ is close to the standard Euclidean heat kernel $\Gamma_E$.

\subsection{Foundamental solutions on charts} 

Denote $P_{R,T}(x)$ to be the parabolic cylinder on $\mathbb{R}^n \times \mathbb{R}$; that is, 
\begin{equation}
P_{R,T}(x) := \left\{ (y,s) \in \RR^n \times \RR^+ : 0< s <T, |y-x|  < R  \right\}\,,  \nonumber
\end{equation}
where $x \in \RR^n$,  $R,T >0$.

Define $Z(x,s;y)$ on $\mathbb{R}^n$ as 
\begin{equation} \label{eq:Z}
Z(x,s;y) : = \frac{\sqrt{|\bar{g}(y)|}}{(4 \pi s)^{n/2}} \exp\left( - \frac{\bar{g}_{ab}(y) (x^a - y^a) (x^b - y^b)}{4s} \right) I_n,
\end{equation} 
where $|\bar{g}(y)| := \det(\bar{g}_{ab}(y))$. 
 Note that $Z(x,s;y)$
is a solution of the principal part of the rescaled system  $ \partial_s X^c  = \bar{g}^{ab}(y)\partial_a \partial_b X^c$.

\begin{lem} 
\label{lem:sol}
Let $\epsilon>0$, $0 < \alpha <1$, $1 < Q < \sqrt{2}$ and $R>0$ be given. Assume that  $r$ is  small so that $rR < r_h/ \sqrt{2}$, where $r_h$ is the $C^{2,\alpha}$-harmonic radius. Consider the rescaled parabolic system (\ref{eq:parabolic}) on a domain $\Omega \times I \subset \RR^n \times \RR_+$ that contains the rescaled domain $P_{R,4}(0)$. 
There exists a fundamental solution $\Gamma$ for (\ref{eq:parabolic}) on the domain $\Omega \times I$ such that
\be \label{eq:Gauss_decay}
|\Gamma(x,s;y)| \leq \frac{C(n)}{s^{n/2}}e^{-\frac {|x-y|^2}{8s}} \quad  \mbox{ for $0<s<2$}.
\ee
In particular,
 for all $y\in B_R(0) \subset \Omega$ and $(x,s) \in P_{R,4}(0)$,
\begin{align}
\label{eq:sol_compare_to_Z}
|\Gamma(x,s;y) - Z(x,s;y)| & \leq \frac{(Q-1)C(n, \alpha)}{s^{(n-\alpha)/2}}e^{-\frac{|x-y|^2}{8s}}; \\
|\nabla \Gamma(x,s;y) - \nabla Z(x,s;y)| & \leq \frac{(Q-1)C(n, \alpha)}{s^{(n+1-\alpha)/2}}e^{-\frac{|x-y|^2}{8s}} .
\end{align}
\label{eq:grad_sol_compare_to_Z}
Moreover, for $(x,s) \notin P_{\frac 12, \frac 14}(y)$,
\be
\label{eq:sol_close_to_Euc}
| \Gamma(x,s;y) - \Gamma_E(x,s;y)| \leq (Q-1)C(n,\alpha);
\ee
\be
\label{eq:grad_sol_close_to_Euc}
|\nabla \Gamma(x,s;y) - \nabla \Gamma_E(x,s;y)| \leq (Q-1)C(n,\alpha).
\ee
\end{lem}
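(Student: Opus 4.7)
The plan is to follow the classical parametrix construction of Friedman, using $Z$ defined in (\ref{eq:Z}) as the first approximation to $\Gamma$ and correcting it via a Volterra integral equation.

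The first step is to estimate $\bar{L}Z$. Since $Z$ solves the heat equation with principal symbol frozen at $\bar{g}^{ab}(y)$, applying $\bar{L}$ produces a term $(\bar{g}^{ab}(x) - \bar{g}^{ab}(y)) \partial_a \partial_b Z$ together with the $O(r)$ and $O(r^2)$ lower-order contributions from (\ref{eq:parabolic}). The H\"older control (\ref{metric:2}) gives $|\bar{g}^{ab}(x) - \bar{g}^{ab}(y)| \lesssim (Q-1)|x-y|^\alpha$ on the chart, and $|\partial_a \partial_b Z| \lesssim s^{-(n+2)/2} e^{-c|x-y|^2/s}$. Absorbing $|x-y|^\alpha$ into $s^{\alpha/2}$ via the standard trick $|x-y|^\alpha e^{-c|x-y|^2/s} \lesssim s^{\alpha/2} e^{-c'|x-y|^2/s}$, and noting that the lower-order terms carry powers of $r \ll 1$ that can be folded into $Q-1$ by shrinking $r$, one expects
\begin{equation}
|\bar{L}Z(x,s;y)| \leq (Q-1)\, C(n,\alpha)\, s^{-(n+2-\alpha)/2}\, e^{-|x-y|^2/(16 s)}. \nonumber
\end{equation}

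Next, I would seek $\Gamma = Z + Z \star \Phi$, where $\star$ denotes space-time convolution and the density $\Phi$ is determined by the Volterra equation $\Phi = -\bar{L}Z + (-\bar{L}Z) \star \Phi$. The Neumann series $\Phi = \sum_{k \geq 1} (-\bar{L}Z)^{\star k}$ converges for $0 < s < 2$ by the standard parabolic convolution lemma
\begin{equation}
\bigl(s^{-(n-\alpha_1)/2} e^{-c|x|^2/s}\bigr) \star \bigl(s^{-(n-\alpha_2)/2} e^{-c|x|^2/s}\bigr) \leq C\, s^{-(n-\alpha_1-\alpha_2)/2}\, e^{-c'|x|^2/s}, \nonumber
\end{equation}
yielding $|\Phi| \leq (Q-1)\, C(n,\alpha)\, s^{-(n+2-\alpha)/2} e^{-|x-y|^2/(16s)}$. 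One more convolution with $Z$ produces the correction estimate (\ref{eq:sol_compare_to_Z}), and combining with the direct Gaussian bound on $Z$ yields (\ref{eq:Gauss_decay}). The gradient estimate (\ref{eq:grad_sol_compare_to_Z}) is then obtained either by differentiating $Z \star \Phi$ in $x$ (each $x$-derivative of $Z$ costs $s^{-1/2}$, compensated by the time-integration inside $\star$) or by invoking the interior parabolic Schauder estimates of \cite{Friedman:1958} on $\Gamma - Z$.

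For (\ref{eq:sol_close_to_Euc}) and (\ref{eq:grad_sol_close_to_Euc}) on $(x,s) \notin P_{1/2,1/4}(y)$, either $s \geq 1/4$ or $|x-y| \geq 1/2$, so the Gaussian factor $s^{-n/2} e^{-|x-y|^2/(8s)}$ is bounded by a dimensional constant. I would write the Duhamel identity $\Gamma - \Gamma_E = \Gamma_E \star (\bar{L} - L_E)\Gamma$, where $L_E$ denotes the Euclidean heat operator; since $\bar{L} - L_E$ has coefficient norms of size $O(Q-1) + O(r)$ (with $r$ again absorbed into $Q-1$), the bound $(Q-1)C(n,\alpha)$ follows on this outer region after using the Gaussian estimate on $\Gamma$ already established. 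The main technical obstacle is bookkeeping the $(Q-1)$-dependence through the Neumann series for $\Phi$: Friedman's construction is stated with a generic constant, so one must check that a single factor of $(Q-1)$ persists, which it does because each iterate $(-\bar{L}Z)^{\star k}$ contributes its own factor of $(Q-1)^k$, giving a series whose sum remains of order $(Q-1)$ when $Q-1$ is small.
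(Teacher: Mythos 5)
Your parametrix construction for \eqref{eq:Gauss_decay}--\eqref{eq:grad_sol_compare_to_Z} matches the paper's proof in all essentials: the H\"older bound on $\bar LZ$ from freezing the principal coefficients at $y$, the Volterra/Neumann series for $\Phi$ with $(Q-1)^k$ bookkeeping, and the convolution lemma (which is exactly \cite[Lemma~3]{Friedman:1964}, the identity the paper cites). That part is fine.

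The gap is in your treatment of \eqref{eq:sol_close_to_Euc}--\eqref{eq:grad_sol_close_to_Euc} via the Duhamel identity $\Gamma - \Gamma_E = \Gamma_E \star (\bar{L}-L_E)\Gamma$. The source term $(\bar L - L_E)\Gamma$ involves \emph{second} derivatives of $\Gamma$ multiplied by $\bar g^{ab}(\xi) - \delta^{ab}$, and this coefficient does not vanish at the source point $y$ (only at $\xi = 0$), so it is merely $O(Q-1)$ near the singularity. Writing $\Gamma \approx Z$ and feeding in the pointwise bound $|\partial^2 Z(\xi,\tau;y)| \lesssim \tau^{-(n+2)/2}e^{-c|\xi-y|^2/\tau}$, the time integral $\int_0^s (s-\tau)^{-n/2}\tau^{-(n+2)/2}\bigl(\tfrac{(s-\tau)\tau}{s}\bigr)^{n/2}\,d\tau \sim \int_0^s \tau^{-1}\,d\tau$ diverges logarithmically, no matter how small $Q-1$ is. The ``Gaussian estimate on $\Gamma$ already established,'' i.e.\ \eqref{eq:Gauss_decay}, controls only $|\Gamma|$ itself, not the second derivatives the Duhamel term requires. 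One can rescue the argument by observing that $\int \partial_a\partial_b Z\,d\xi = 0$, so the leading $\tau^{-1}$ singularity cancels after Taylor-expanding $\Gamma_E(x,s-\tau;\cdot)$ about $y$; but that moment-cancellation argument is delicate, must also be carried through $L_E(Z\star\Phi)$, and is nowhere in your proposal. The paper sidesteps all of this by splitting $\Gamma - \Gamma_E = (\Gamma - Z) + (Z - \Gamma_E)$: the first difference is already bounded by \eqref{eq:sol_compare_to_Z}, which outside $P_{1/2,1/4}(y)$ is $O(Q-1)$; the second is a purely algebraic comparison of two explicit Gaussians whose metrics differ by at most $Q-1$, computed directly from $\partial_a Z = \tfrac{1}{2s}\bar g_{ab}(y)(x^b-y^b)Z$ and $\partial_a\Gamma_E = \tfrac{1}{2s}\delta_{ab}(x^b-y^b)\Gamma_E$. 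You should replace the Duhamel step with this split.
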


\begin{proof}
Following the parametrix method in Friedman  \cite[Section 4 in Chapter 9 and Section 4 in Chapter 1]{Friedman:1964},
%
we construct $\Gamma$ in the form
\begin{equation} \label{eq:fund}
\Gamma ( x,s ;y) = Z(x,s;y)  + \int_0^s \int_{\Omega} Z(x, s-\tau ; \xi) \Phi(\xi, \tau ;y) d\xi d\tau \nonumber
\end{equation}
for some $\Phi$ which is an $n \times n$ matrix-valued function defined on $\Omega \times I$. 
If $\Phi$ is H\"{o}lder continuous, then 
$\Gamma$ satisfies the system (\ref{eq:parabolic}) as a function of $(x,s)$ if and only if 
\begin{equation}\label{fund_sol:1}
\Phi(x, s; y) := L Z(x, s; y) + \int_0^s \int_{\Omega} LZ(x, s-\tau; \xi) \Phi(\xi ,\tau; y) d\xi d\tau.
\end{equation}
(See  \cite[Lemma 5 in Chapter 9, Section 4, p. 250]{Friedman:1964} for details.)
We show that there is a $\Phi$ of the form 
\begin{equation}
\Phi(x, s; y) = \sum_{i=1}^\infty (LZ)_i(x, s; y), \nonumber
\end{equation}
where $(LZ)_1 = LZ$ and 
\begin{equation}
(LZ)_i = \int_0^s \int_{\Omega} LZ(x, s-\tau; \xi) (LZ)_{i-1}(\xi, \tau;  y) d\xi d\tau , \nonumber
\end{equation}
so that $\Phi$ is a formal solution of (\ref{fund_sol:1}). To find such $\Phi$, we first show that $(LZ)_i$ is integrable for all $i \in \NN$ and then show the convergence of the series.  
Since $Z$ is diagonal and identical along the diagonal. It suffices to look at the scalar function
\begin{equation}
z(x,s - \tau ;y) := \frac{\sqrt{|\bar{g}(y)|}}{(4 \pi (s - \tau))^{n/2}} \exp\left( - \frac{\bar{g}_{ab}(y) (x^a - y^a) (x^b - y^b)}{4(s - \tau)} \right) . \nonumber
\end{equation}
For simplicity, denote $A :=  \bar{g}_{ab}(y) (x^a - y^a) (x^b - y^b)$.
By rewriting
\begin{align}
\exp\left( - \frac{A}{4(s - \tau)}  \right) = \exp\left( - \epsilon \frac{A}{4(s - \tau)}  \right)\exp\left( - (1-\epsilon)\frac{A}{4(s - \tau)}  \right) , \nonumber
\end{align}
and using the inequality $\sigma^{n/2 - \mu} e^{-\epsilon \sigma} \leq \mathrm{constant}$ for fixed constants $\mu, \epsilon$, and $0\leq \sigma <\infty$, we obtain a bound for $z(x,s - \tau; y)$:
\begin{align}
|z(x, s - \tau; y)|
&=  \frac{\sqrt{|\bar{g}(y)|}}{ \pi^{n/2}(4(s -\tau))^{\mu}}  A^{\mu - n/2}  \left(\frac{A}{4 (s-\tau)}  \right)^{n/2 -\mu} \exp\left( - \frac{A}{4(s - \tau)}  \right)  \nonumber\\
& = \frac{\sqrt{|\bar{g}(y)|}}{ \pi^{n/2}(4(s -\tau))^{\mu}}  A^{\mu - n/2}  \left(\frac{A}{4 (s-\tau)}  \right)^{n/2 -\mu} \nonumber \\
& \qquad \qquad \times
  \exp\left( - \epsilon \frac{A}{4(s - \tau)}  \right)\exp\left( - (1-\epsilon)\frac{A}{4(s - \tau)}  \right)  \nonumber\\
& \leq \frac{C}{(s-\tau)^{\mu} | x-y|^{n-2\mu}} \exp\left( - \frac{\lambda_0 |x-y|^2}{4(s - \tau)} \right) , \nonumber
\end{align}
for any $\lambda_0 < Q^{-1}, 0 \leq \mu \leq n/2$, and some constant $C=C(n,Q)$. Similarly, it can be proved that 
\begin{align}
\sum_{c=1}^{n} \left| \frac{\partial z(x, s - \tau; y)}{\partial x^c} \right| \leq \frac{C}{(s-\tau)^{\mu} | x-y|^{n+1-2\mu}} \exp\left( - \frac{\lambda_0 |x-y|^2}{4(s - \tau)} \right)  \nonumber
\end{align}
for any $\lambda_0 < Q^{-1}$ and $0 \leq \mu \leq (n+1)/2$. 
Since that $Z$ is a solution to the principal part of (\ref{eq:parabolic}) and that  the coefficients of the  rescaled parabolic system (\ref{eq:parabolic}) is controlled under the  $C^{2,\alpha}$ harmonic coordinates, we further have 
\begin{align}
\left| \frac{\partial z(x, s - \tau; y)}{\partial s} \right| + \sum_{c, d=1}^{n} \left| \frac{\partial z(x, s - \tau; y)}{\partial x^c \partial x^d} \right|
\leq  \frac{(Q-1)C(n, \alpha)}{(s-\tau)^{\mu} | x-y|^{n+2-2\mu}} \exp\left( - \frac{\lambda_0 |x-y|^2}{4(s - \tau)} \right) \nonumber
\end{align}
for any $\lambda_0 < Q^{-1}$ and $0 \leq \mu \leq (n+2)/2$. 
Therefore, under the rescaled $C^{2, \alpha}$ coordinate chart, we have 
\begin{align}
\label{est:LZ}
|LZ(x,s-\tau; y)| \leq \frac{(Q-1)C(n, \alpha)}{(s-\tau)^{\mu} | x-y|^{n+2-2\mu - \alpha}} \exp\left( - \frac{\lambda_0 |x-y|^2}{4(s - \tau)} \right)
\end{align}
for any $\lambda_0 < Q^{-1}$ and $0 \leq \mu \leq (n+2 - \alpha)/2$. Hence, the singularity $s = \tau, x=y$ is integrable when $1-\alpha/2 \leq \mu \leq (n+2 - \alpha)/2$. 
Recall the integral formula in \cite[Lemma 3]{Friedman:1964}, that is, for fixed constants $\alpha$ and $\beta$, 
\begin{align} \label{formula:integral}
&\int_0^s \int_{\Omega} (s-\tau)^{-\alpha} e^{-\frac{\lambda_0|x - \xi|^2}{4(s-\tau)}} \tau^{-\beta} e^{-\frac{\lambda_0| \xi -y|^2}{4\tau}} d\xi d\tau \nonumber \\
\leq &\, \left(\frac{4 \pi}{\lambda_0} \right)^{n/2} \frac{\Gamma(n/2-\alpha+1) \Gamma(n/2-\beta+1)}{\Gamma(n-\alpha-\beta+2)} s^{n/2+1 -\alpha - \beta} e^{-\frac{\lambda_0|x - y|^2}{4s}},
\end{align}
where $\Gamma(\cdot)$ denotes the gamma function. 
Using (\ref{est:LZ}) with $\mu = (n + 2 - \alpha)/2$ and applying (\ref{formula:integral}), we find
\begin{align}
|(LZ)_2 (x,s-\tau; y)| \leq \frac{(Q-1)C(n, \alpha)}{(s-\tau)^{(n+2 - 2\alpha)/2}} \exp\left( - \frac{\lambda_0 |x-y|^2}{4(s-\tau) } \right) \nonumber
\end{align}
Proceeding by induction, we further obtain
\begin{align}
|(LZ)_i (x,s-\tau; y)| \leq \frac{(Q-1)C(n, \alpha)}{\Gamma(i\alpha)(s-\tau)^{(n+2 - i\alpha)/2}} \exp\left( - \frac{\lambda_0 |x-y|^2}{4(s - \tau)} \right) \nonumber\,,
\end{align}
where $\lambda_0 < Q^{-1}$. Note that the coefficient $C(n,\alpha)$ is independent of $i$. 
It follows that the series expansion of $\Phi$ is convergent and satisfies (\ref{fund_sol:1}).  Furthermore,
\begin{equation}
|\Phi(x,s;y)| \leq \frac{(Q-1) C(n,\alpha) }{s^{(n+2-\alpha)/2}}e^{-\frac{|x-y|^2}{8s}}  \nonumber
\end{equation}
by setting $\lambda_0= 1/\sqrt{2}$.

The H\"{o}lder continuity of $\Phi(x,s;y)$ in $x$  is the same as the proof in \cite[Section 4 in Chapter 9 and Section 4 in Chapter 1]{Friedman:1964} (see (1.4.17) and  (9.4.17)). Therefore, we omit the details and have $\Gamma$ the fundament solution of (\ref{eq:parabolic}).

By (\ref{formula:integral}) and a trivial bound of $|Z(x, s-\tau;  \xi)|$ under the $C^{2,\alpha}$-harmonic coordinate, it follows that
\begin{equation}
\int_0^s \int_{\Omega} |Z(x, s-\tau;  \xi) \Phi(\xi, \tau; y) | d\xi d\tau \leq \frac{(Q-1)C(n, \alpha) }{s^{(n-\alpha)/2}} e^{-\frac{|x-y|^2}{8s}},  \nonumber
\end{equation}
and hence (\ref{eq:sol_compare_to_Z}).
In particular, we have (\ref{eq:Gauss_decay}),  for $s \leq 4$,
\begin{equation}
|\Gamma(x, s; y)| \leq \frac{D(n,  \alpha)}{s^{n/2}} e^{-\frac{|x-y|^2}{8s}}  \nonumber
\end{equation}
for some constant $D(n,\alpha)$. 
Note that 
\begin{equation}
\nabla \Gamma(x, s; y) = \nabla Z(x, s; y) + \int_0^s \int_{\Omega} \nabla Z(x, s-\tau; \xi) \Phi( \xi, \tau; y) d\xi d\tau.  \nonumber
\end{equation}
Applying (\ref{formula:integral}) once again, it follows that
\begin{equation}
\int_0^s \int_{\Omega} |\nabla Z(x, s-\tau; \xi)||\Phi(\xi, \tau; y)| d\xi d\tau \leq   \nonumber
\frac{(Q-1)C(n, \alpha) }{s^{(n+1-\alpha)/2}} e^{-\frac{|x-y|^2}{8s}}.
\end{equation}
and hence (\ref{eq:grad_sol_compare_to_Z}) holds.

Ineqaulities (\ref{eq:sol_close_to_Euc}) and (\ref{eq:grad_sol_close_to_Euc}) follow from straightforward compuations that 
\be
\frac {\partial}{\partial x^a} Z(x,s;y) = \frac 1{2s} \bar{g}_{ab}(y)(x^b-y^b) Z(x,s;y)  \nonumber
\ee
and
\be
\frac {\partial}{\partial x^a} \Gamma_E(x,s;y) =  \frac 1{2s}\delta_{ab} (x^b - y^b) \Gamma_E(x,s;y).  \nonumber
\ee
It follows that for $(x,s) \notin P_{\frac 12, \frac 14}(y)$, 
\be
| \Gamma_E(x,s;y) - Z(x,s;y)| \leq (Q-1)C(n,\alpha)  \nonumber
\ee
and 
\be
|\nabla \Gamma_E(x,s;y) - \nabla Z(x,s;y)| \leq (Q-1)C(n,\alpha)  \nonumber
\ee
and hence  (\ref{eq:sol_close_to_Euc}) and (\ref{eq:grad_sol_close_to_Euc}). 
\end{proof}

With the interior Schauder estimates, the semigroup domination, and the maximum principle on the heat equation $\partial_t u = \Delta_M u$, we have Lemma \ref{lem:closeness} which shows that every solution to (\ref{eq:parabolic}) that decays exponentially is close to  $\Gamma$ that we construct in Lemma \ref{lem:sol}, and hence is close to the standard Euclidean heat kernel $\Gamma_E$.

\begin{lem} \label{lem:closeness}
Let $\epsilon >0$, $C(n)>0$ and $0<\alpha <1$, $1 < Q < \sqrt{2}$, and $r$  be given. Suppose $\Gamma_{TM}^1$ and $\Gamma_{TM}^2$ are fundamental solutions of (\ref{eq:parabolic}) on $\RR^n \times \RR_+$ satisfying the decay rate
\begin{equation}\label{Exponential decay of GammaTM}
|\Gamma_{TM}^i (x,s; y)|\leq \frac{C(n)}{s^{n/2}}e^{-\frac{|x-y|^2}{8t}}, \quad \mbox{for $i = 1, 2$ and $0<s\leq4$}.
\end{equation} 
There exists $\bar{R} =\bar{R}(n, \alpha, Q, C(n), \epsilon)$ so that for $R > \bar{R}$,
\begin{equation} \label{diff_bd:1}
| \Gamma_{TM}^1(\cdot, \cdot ; y) - \Gamma_{TM}^2(\cdot, \cdot ; y) |_{C^{2,\alpha}(P_{2,4}(0))} \leq \epsilon
\end{equation}
for all $y\in B_R(0)$.
\end{lem}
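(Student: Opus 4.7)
The plan is to set $W(x, s) := \Gamma^1_{TM}(x,s;y) - \Gamma^2_{TM}(x,s;y)$ for fixed $y$. Since both $\Gamma^i_{TM}$ are fundamental solutions of the same rescaled parabolic system (\ref{eq:parabolic}) sharing the identical delta-type singularity at $(y, 0)$, the difference $W$ satisfies the homogeneous equation $\bar L W = 0$ with vanishing initial data. The goal is to control $\|W\|_{C^{2, \alpha}(P_{2,4}(0))}$ uniformly in $y \in B_R(0)$.

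The strategy is two steps. First, I would extract an $L^\infty$ bound on $W$ over a slightly enlarged parabolic cylinder $P_{3, 4}(0) \supset P_{2, 4}(0)$ via the triangle inequality combined with the hypothesized Gaussian decay (\ref{Exponential decay of GammaTM}):
\begin{equation*}
|W(x, s)| \leq \frac{2\, C(n)}{s^{n/2}} \exp\!\left( - \frac{|x-y|^2}{8 s} \right) \quad \text{on } P_{3, 4}(0).
\end{equation*}
For $y$ with $|y|$ sufficiently large, a direct optimization in $s \in (0, 4]$ of the right-hand side shows that this bound decays faster than any polynomial in $|y|$, so it is uniformly small. Second, I would promote this $L^\infty$ control to a $C^{2, \alpha}$ bound via the interior parabolic Schauder estimate of \cite{Friedman:1958}. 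The coefficients of (\ref{eq:parabolic}) are controlled in $C^\alpha$ on the rescaled chart by (\ref{metric:2}) together with the universal harmonic-radius bound of Corollary \ref{cor:har_rad}, and the rescaling by $r < 1$ does not inflate Hölder seminorms, so the Schauder constant depends only on $n, \alpha, Q$:
\begin{equation*}
\|W\|_{C^{2, \alpha}(P_{2,4}(0))} \leq C(n, \alpha, Q)\, \|W\|_{L^\infty(P_{3, 4}(0))}.
\end{equation*}
Choosing $\bar R = \bar R(n, \alpha, Q, C(n), \epsilon)$ large enough then forces the right-hand side below $\epsilon$, establishing (\ref{diff_bd:1}).

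The main technical hurdle is ensuring genuine uniformity in $y$ when the singularity $(y, 0)$ approaches the observation cylinder $P_{2,4}(0)$. I would resolve this by exploiting that $W$, as a bounded solution of the homogeneous rescaled system with vanishing initial data and Gaussian decay, is constrained by its scalar heat-kernel majorant via the semigroup domination (\ref{semi_group_domination}) applied to $|W|$; combined with the uniqueness of bounded solutions of the scalar heat equation with zero initial data, this forces the ``close-$y$'' contribution to be negligible, so that the Schauder estimate above applies uniformly. A secondary subtlety is to verify that the Schauder constant really is independent of the choice of harmonic chart and of the rescaling parameter $r$, which is ultimately handled by invoking the universal harmonic-radius estimate of Corollary \ref{cor:har_rad} and observing that the scaling $r < 1$ improves rather than worsens the $C^\alpha$ bounds on the coefficients of $\bar L$.
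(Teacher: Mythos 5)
Your overall scaffolding---set $W = \Gamma^1_{TM} - \Gamma^2_{TM}$, observe it solves the homogeneous rescaled system with zero initial data, and pass from a $C^0$ bound on the enlarged cylinder $P_{3,4}(0)$ to the desired $C^{2,\alpha}$ bound on $P_{2,4}(0)$ via the interior Schauder estimate of \cite{Friedman:1958} with constant controlled by the $C^{2,\alpha}$-harmonic bounds on the coefficients---matches the structure of the paper's proof, and your remark that rescaling by $r<1$ only improves the H\"older control of the coefficients of $\bar L$ is correct. The gap is in how you get the $C^0$ control.

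Your Step 3 (maximize $\tfrac{2C(n)}{s^{n/2}} e^{-|x-y|^2/8s}$ over $(x,s)\in P_{3,4}(0)$) only gives smallness when $|y|$ is large; it gives nothing uniform for $y$ near the observation cylinder, which you acknowledge is the hard case. Your proposed fix, however, does not close it. If you genuinely take the two fundamental solutions on all of $\mathbb{R}^n\times\mathbb{R}_+$ with the stated Gaussian decay, then $W$ is a bounded solution of the homogeneous system with zero initial data and your ``uniqueness of bounded solutions'' argument forces $W\equiv 0$, rendering the lemma vacuous and the choice of $\bar R$ irrelevant. The lemma is non-trivial precisely because the rescaled system \eqref{eq:parabolic} has coefficients controlled only on a bounded chart domain $\Omega$ (of size depending on $R$ and $r$), so the two fundamental solutions can differ by a boundary term, and the uniqueness theorem you invoke simply does not apply there without boundary data. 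The paper's replacement step is the maximum principle on the parabolic cylinder $B_R(0)\times(0,4]$: since $W$ vanishes on the initial slice, its sup on $P_{3,4}(0)$ is bounded by its sup on the lateral boundary $\partial B_R(0)\times[0,4]$, and there the Gaussian bound does give smallness (for $y$ at bounded distance from $0$, the factor $e^{-|x-y|^2/8s}$ with $|x|=R$ is uniformly small for $R$ large after optimizing in $s$). Semigroup domination is used to reduce this comparison to the scalar kernels $\Gamma^i_M$. That boundary-comparison argument is the mechanism you are missing; the direct optimization and the uniqueness appeal cannot substitute for it.
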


\begin{proof}
By the Schauder type interior estimates \cite[Theorem 3]{Friedman:1958}, there is a constant $C = C(n, \alpha, Q)$ so that for $1\leq a, b \leq n$,
\begin{equation}
| (\Gamma_{TM}^i)_{ab}(\cdot, \cdot ;  y)   |_{C^{2,\alpha}(P_{2,4}(0))} \leq 
C\left| \sum_{b=1}^n (\Gamma_{TM}^i)_{ab}(\cdot, \cdot ; y) \right|_{C^0(P_{3,4}(0))} .
\end{equation}
By the semigroup domination theory \cite[Theorem 3.1]{Hess_Schrader_Uhlenbrock:1980} 
\begin{equation}
\textrm{Tr} \left( e^{t\Delta_{TM}} \right) \leq n \textrm{Tr}\left(  e^{t \Delta_M} \right),
\end{equation} 
and the fact that the associated solutions $\Gamma_M^i(x, s; y)$, $i=1,2$, of $\partial_s u = \Delta_M u$ decay exponential $\frac{C(n)}{s^{n/2}}\exp\big( -\frac{|x-y|^2}{8s}\big)$, we have the bound \eqref{Exponential decay of GammaTM}.

By the maximum principle, there exists $\bar{R} = (n, \alpha, Q, C, \epsilon)$ so that for $y \in B_R(0)$, $R > \bar{R}$,
\begin{align}
& | \Gamma^1_M(\cdot, \cdot ; y) - \Gamma^2_M(\cdot, \cdot ; y) |_{C^{0}(P_{3,4}(0))} \nonumber \\
&\leq | \Gamma^1_M(\cdot, \cdot ; y) - \Gamma^2_M(\cdot, \cdot ; y) |_{C^0(\partial B_R(0) \times [0,4])} 
< \epsilon.
\end{align}
Combine this with \eqref{Exponential decay of GammaTM}, we have (\ref{diff_bd:1}).
\end{proof}
By Kato's inequality and Gaussian bounds on $K_M$, we have  (\ref{est:heat_kernel}), that is, $\bar{K}_{TM}$ has a  Gaussian upper bound.
Thus, by Lemmas \ref{lem:sol} and \ref{lem:closeness}, $\bar{K}_{TM}$ is close to $\Gamma_E$. 

\section{Embedding with Heat Kernels}
\label{sec:embedding}
In this section, we prove Theorems \ref{thm:embedding} and \ref{thm:uniform} that manifolds can be embedded with heat kernels as well as  how the local dilatation can be controlled by taking the fast decay of heat kernels and  the rescaling technique into account.  

\begin{proof}
 Let $\epsilon$ be given. 
To determine a scale $r>0$, we need the following three steps.

Step 1.
 By the Schauder interior estimates  \cite[Theorem 3]{Friedman:1958} for parabolic systems, 
$$
|K_{TM}( u^{-1}(\cdot), \cdot ;q)|_{C^{2+\alpha} (P_{\frac {r_h}{8},T}(0))}\leq C(n)  |K_{TM}(u^{-1}(\cdot), \cdot; q)|_{C^0(P_{\frac {r_h}{4},T}(0))}
$$
on a local domain in $\mathbb{R}^n \times\mathbb{R}_+$. 
By the Gaussian bound of the heat kernel  (\ref{est:heat_kernel}),  we can select  $0< r_0 = r_0(n, \kappa, i_0, \epsilon) < \frac {r_h}{2}$ so that for $0<t < 2r_0^2$,
\be
\label{grad_control}
(2t)^{\frac {3n+2}{2}} \int_{M \setminus B_{\frac{r_h}{2}}(p)}\|\nabla_{q} | K_{TM}(p,t;q)\|_{HS}^2| dq < \epsilon.
\ee

Step 2. Let $R_1 = R_1(n, \epsilon)$ be a radius so that for every $ \Gamma(x,s;y)$ satisfying the  gradient decay
\begin{align}\label{eq:decay}
| \nabla_{y} \| \Gamma(x,s;y) \|^2_{HS} | & \leq \frac{D(n)}{s^{n+1/2}}\exp\left( -\frac{|x-y|^2}{8s} \right) \mbox{ on $(\mathbb{R}^n \setminus B_{\frac{R_1}{\sqrt{2}}}(0)) \times \mathbb{R}$},
\end{align}
the integral
\be
\label{grad_decay}
\int_{\mathbb{R}^n \setminus B_{\frac{R_1}{\sqrt{2}}}(0)} | \nabla_{y} \| \Gamma (0,s;y)\|_{HS}^2| dy < \epsilon \quad \mbox{for  $\frac 12 \leq s \leq 2$. }
\ee

Step 3. Set $\alpha = \frac 12$. Choose $1 < Q < \sqrt{2}$ so that $Q-1 < \epsilon$.  Select $R_2 = R_2(n, \alpha = \frac 12, Q, \epsilon)$ as in Lemma \ref{lem:closeness}
such that the rescaled heat kernel $\bar{K}_{TM}(0,\frac 12; y)$ and the solution $\Gamma(0,\frac 12;y)$ obtained in Lemma \ref{lem:sol}  are close for all $y \in B_{R_2}(0)$. 

We now set 
\be 
r_1 := \min \left(r_0,  \frac{r_h}{2R_1}, \frac{r_h}{2R_2} \right)
\ee
and choose a scale $r< r_1$ and $t = \frac {r^2}{2}$. 

Let $\omega$ denote the modulus of continuity of $\nabla \| \Gamma_E(0,s;\cdot)\|^2_{HS}$ .
Let $\delta$ be smaller than $ r \omega^{-1}(\epsilon)/2$  and $\{ q_i\}_{i=1}^{N_0}$ be a $\delta$-net. Let $\{A_i\}_{i=1}^{N_0}$ be a partition of the manifold $M$ so that $A_i \subset B_{\delta}(q_i)$.

 Let $p \in M$. With  the chosen $\alpha =\frac{1}{2}$ and $Q$, there exists a $C^{2,\alpha}$-harmonic coordinate chart $u: B_{r_h}(p) \rightarrow \RR^n$  with $u(p)=0$. 
Let $v = \sum_{a=1}^n v^a \partial_a \in T_pM$, $|v| =1$  so that $\hat{v} = (v^1, \cdots, v^n)$ is the coordinate of $v$ under the harmonic coordinate chart.
We next show that for any $t<2r_0^2$, where $r_0 = r_0(n, \kappa, i_0, \epsilon)$, there exists a constant $\delta=\delta(n,\kappa, i_0, \epsilon, t)$ so that for every $\delta$-net, $ |dH_p(v)|^2 $ is close to $1$. 

Let $I_{\rho}(p)$ denote the subset of $\{ 1, \cdots, N_0 \}$ such that $A_j \cap B_{\rho}(p) \neq \emptyset$ and  $y_i := r^{-1}u(q_i)$. 
 Then, since  $\bar{K}_{TM}(x,s;y) := r^n K_{TM}( u^{-1}(xr), sr^2; u^{-1}(yr))$,
\begin{align*}
 |dH_p(v)|^2 
&= \frac{(2t)^{\frac{3n+2}{2}}}{V^2_e}  \sum_{i=1}^{N_0} |A_i|  (\nabla \| K_{TM} (p, t; q_i) \|^2_{HS} \cdot v)^2 \\
& =  \frac{1}{V^2_e} \sum_{i \in I_1}\frac{ |A_i|}{r^n} (\nabla \| \bar{K}_{TM} (0, \frac{1}{2};y_i) \|^2_{HS} \cdot \hat{v})^2 \\
& \quad +  \frac{1}{V^2_e}  \sum_{i \in I_2}\frac{ |A_i|}{r^n}  (\nabla \|\bar{ K}_{TM} (0, \frac{1}{2}; y_i) \|^2_{HS} \cdot \hat{v})^2 \\
& \quad  +     \frac{(2t)^{\frac{3n+2}{2}}}{V^2_e} \sum_{i\in I_3} |A_i|   (\nabla \| K_{TM} (p, t; q_i) \|^2_{HS} \cdot v)^2 \\
& =\mbox{ (I) + (II) + (III)}
\end{align*}
where $I_1 = I_{rR_1}(p)$, $I_2 = I_{r_h/2}(p)\setminus I_1$, and $I_3 = \{1, \cdots, N_0\} \setminus I_1 \cup I_2$.  Here, we write the first two terms in terms of  the rescaled coordinates.

First, note that $\cup_{i \in I_3} A_i$ lies inside $ M \setminus B_{\frac {r_h}{2}}(p)$.  Because of  equation (\ref{grad_control}),  (III) is controlled by $\epsilon$, that is,
\be \label{I3}
   (2t)^{\frac{3n+2}{2}} \sum_{i\in I_3} |A_i|   (\nabla \| K_{TM} (p, t; q_i) \|^2_{HS} \cdot v)^2 < C(n) \epsilon.
\ee

Second, since $\mathrm{diam}( A_i) < r\omega(\epsilon)$, 
\begin{align*}
\left|  \sum_{i \in I_2}\frac{ |A_i|}{r^n}  (\nabla \|\bar{ K}_{TM} (0, \frac{1}{2}; y_i) \|^2_{HS} \cdot \hat{v})^2 
   - \sum_{i\in I_2} \int_{\frac{u(A_i)}{r}}  (\nabla \|\bar{ K}_{TM} (0, \frac{1}{2}; y) \|^2_{HS} \cdot \hat{v})^2 d\mu    \right| < C(n)\epsilon
\end{align*}
where  $d\mu$ denotes the push-forward of the standard volume measure  under $r^{-1}u$.

Note that  $\cup_{i \in I_2} A_i$ lies outside of $B_{rR_1}(p)$ and inside $B_{r_h}(p)$. Thus, $\cup_{i \in I_2}r^{-1} u(A_i)$ is a subset of $\mathbb{R}^n \setminus  B_{\frac{R_1}{\sqrt{2}}}(0)$.
By Lemmas \ref{lem:sol} and \ref{lem:closeness}, $\bar{K}_{TM}(x,\frac{1}{2}; y)$  satisfies the gradient decay (\ref{eq:decay}).  By the choice of $R_1$ in (\ref{grad_decay}), we have that
$$
\sum_{i\in I_2} \int_{\frac{u(A_i)}{r}}  (\nabla \|\bar{ K}_{TM} (0, \frac{1}{2}; y) \|^2_{HS} \cdot \hat{v})^2 d\mu   < \epsilon
$$
and that (II) is controlled by $\epsilon$ as well, 
\be \label{I2}
 \sum_{i \in I_2}\frac{ |A_i|}{r^n}  (\nabla \|\bar{ K}_{TM} (0, \frac{1}{2}; y_i) \|^2_{HS} \cdot \hat{v})^2 < C(n) \epsilon.
\ee

Last, we show that (I) is close to $1$. 
Let 
\begin{align}
R :=  \max(R_1, R_2).
\end{align}
 Note that $B_R(0) \subset u(B_{r_h}(p))/r$.
By Lemma \ref{lem:closeness} and our choice of $Q$ and $R_2$, for all $y \in B_R(0)$, 
$\bar{K}_{TM}(0,\frac 12; y)$ is close to the solution $\Gamma(0,\frac 12; y)$ constructed in Lemma \ref{lem:sol}, which is close to the Euclidean kernel $\Gamma_E(0, \frac 12; y)$. Specifically,
\begin{equation*}
\left| \nabla \| \bar{K}_{TM}(0,\frac{1}{2}, y) \|^2_{HS} - \nabla \| \Gamma_E(0, \frac{1}{2}, y) \|^2_{HS} \right|  < C(n) \epsilon. 
\end{equation*}
This implies
\begin{align} \label{gamma:1}
 &\left| \sum_{i \in I_1} \left( ( \hat{v} \cdot \nabla \| \bar{K}_{TM}(0, \frac{1}{2}; y_i ) \|^2_{HS} )^2  -  ( \hat{v} \cdot \nabla \| \Gamma_E(0, \frac{1}{2}; y_i) \|^2_{HS} )^2 \right) \frac{|A_i|}{r^{n}} \right|  \nonumber \\
&<  C(n) \epsilon \sum_{ i \in I_1} \frac{|A_i|}{r^n} \nonumber \\
& <  C(n) \epsilon |B_{R_1}(0)|.
\end{align}
Since $\mathrm{diam}(A_i) < r \omega(\epsilon)$,  we have 
\begin{align} \label{gamma:2}
 \left|  \sum_{i \in I_1}  ( \nabla_{\hat{v}} \| \Gamma_E( 0, \frac{1}{2} ; y) \|^2_{HS} )^2 \frac{|A_i|}{r^{n}}  - \int_{\frac{u(A_i)}{r}} ( \nabla_{\hat{v}} \| \Gamma_E(0, \frac{1}{2};  y) \|^2_{HS} )^2 d\mu  \right| 
 < C(n) \epsilon, 
\end{align}
where $d\mu$ denotes the push-forward of the standard volume form under $r^{-1}u$.  
Due to that $Q^{-1} < g < Q$ and symmetry of $\Gamma_E(x, t ; y) = \frac{1}{(4\pi t)^{n/2}} e^{-\frac{|x-y|^2}{4t}} I_n$, we have 
\begin{align}  \label{gamma:3}
& \left|\sum_{i \in I_1}  \left( \int_{\frac{u(A_i)}{r}}  (\nabla_{\hat{v}} \| \Gamma_E(0, \frac{1}{2}; y) \|^2_{HS} )^2 d\mu - \int_{\frac{u(A_i)}{r}}  ( \nabla_{\hat{v}} \| \Gamma_E(0, \frac{1}{2}; y) \|^2_{HS} )^2 dy \right) \right|  \nonumber \\
& = \left|\sum_{i \in I_1}  \left( \int_{\frac{u(A_i)}{r}}  (\nabla_{\hat{v}} \| \Gamma_E(0, \frac{1}{2}; y) \|^2_{HS} )^2 d\mu - \int_{\frac{u(A_i)}{r}}  ( \partial_{x_1} \| \Gamma_E(0, \frac{1}{2}; y) \|^2_{HS} )^2 dy \right) \right|   \\
& <  C(n) (Q-1) \epsilon < C(n) \epsilon. \nonumber
\end{align}

Furthermore, by the choice of $R_1$ and (\ref{grad_decay}), 
\be \label{gamma:4}
\left|\sum_{i \in I_1}  \int_{\frac{u(A_i)}{r}}  ( \partial_{x_1} \| \Gamma_E(0, \frac{1}{2}; y) \|^2_{HS} )^2 dy - V_e^2 \right| < \epsilon
\ee

Combining inequalities (\ref{gamma:1}), (\ref{gamma:2}), (\ref{gamma:3}), and (\ref{gamma:4}), we have that (I) is close to $1$. Together with (\ref{I3}) and (\ref{I2}), we conclude that 
\begin{equation*}
|dH_p(v)^2 -1| \leq C(n) \epsilon
\end{equation*}
that is, (\ref{almostiso}) in Theorem \ref{thm:embedding}.

As for the second part, (\ref{finite}) of Theorem \ref{thm:embedding}, by Lemma 10 in \cite{Lin_Wu:2018}, it follows that there exists an $N = N(n, \kappa, i_0, V, \epsilon, t_0)$ so that for any $m \geq N$ and $0<t\leq t_0$, 
\begin{align}
\| \|K_{TM}^{(m)} ( \cdot, t ; q)\|^2_{HS} - \| K_{TM}( \cdot, t ; q)\|^2_{HS} \|_{\infty} & < \epsilon \\
\| \nabla \| K_{TM}^{(m)} ( \cdot, t ; q) \|^2_{HS} - \nabla \| K_{TM}( \cdot, t ; q) \|^2_{HS} \|_{\infty} & < \epsilon.
\end{align}
Therefore,
\begin{equation*}
1- \epsilon < \left|  dH_p^{(m)} (v) |^2 \right| < 1 + \epsilon.
\end{equation*}
\end{proof}

Next, we prove Theorem \ref{thm:uniform}

\begin{proof}
For any $A>0$, we simply replace for each fixed $i$ the point $q_i$ in Theorem \ref{thm:embedding} by points $q_i^j, j = 1, \cdots, N_i$, where $N_i = \lceil |A_i| / A\rceil$. Next, rename all the points $q_i^j$ to $p_k$. It follows that when $A = A(n,\kappa, i_0, \epsilon, t, V)$ small enough,
\begin{equation*}
1 - \epsilon < | dH_p(v) | < 1 + \epsilon.
\end{equation*}
\end{proof}

\bibliography{noisyManifold}
\bibliographystyle{plain}

\end{document}